\numberwithin{equation}{section}
\numberwithin{figure}{section}
\theoremstyle{plain}
\newtheorem{thm}{\protect\theoremname}
\providecommand{\theoremname}{Theorem}
\begin{document}

\title{Some nonlinear functions of Bernoulli and Euler umbr\ae}

\author{Christophe Vignat, Université d'Orsay}
\begin{abstract}
In a recent paper \cite{Yu}, Yi-Ping Yu has given some interesting
nonlinear moments of the Bernoulli umbra; the aim of this paper is
to show the probabilistic counterpart of these results and to extend
them to Bernoulli polynomials.
\end{abstract}
\maketitle

\section{Introduction}

In a recent rich contribution, Yi-Ping Yu gives several nonlinear
moments of the Bernoulli umbra $\mathfrak{B}$ defined by its generating
function\[
\exp\left(z\mathfrak{B}\right)=\frac{z}{\exp\left(z\right)-1},\,\,\vert z\vert<2\pi.
\]
This umbra is related to the Bernoulli numbers as\[
\mathfrak{B}^{n}=B_{n};
\]
for example\[
B_{0}=1;\,\, B_{1}=-\frac{1}{2};\,\, B_{2}=\frac{1}{6};\,\, B_{3}=0;\,\, B_{4}=-\frac{1}{30}.
\]
and all odd orders Bernoulli numbers except $B_{1}$ equal $0.$ 

Similarly, the Euler umbra $\mathfrak{E}$ is defined by the generating
function\[
\exp\left(z\mathfrak{E}\right)=\text{sech}\left(z\right)
\]
We generalize here these umbr\ae \,\,and define the Bernoulli umbra
$\mathfrak{B}\left(x\right)$ as\begin{equation}
\exp\left(z\mathfrak{B}\left(x\right)\right)=\frac{ze^{zx}}{e^{z}-1}\label{eq:expzB(x)}
\end{equation}
and the Euler umbra $\mathfrak{E}\left(x\right)$ as\[
\exp\left(z\mathfrak{E}\left(x\right)\right)=\frac{2e^{zx}}{e^{z}+1}.
\]
As a result, \[
\mathfrak{B}^{n}\left(x\right)=B_{n}\left(x\right)
\]
and \[
\mathfrak{E}^{n}\left(x\right)=E_{n}\left(x\right),
\]
respectively the Bernoulli and Euler polynomials of degree $n.$ 

The aim of this paper is to compute some nonlinear functions of these
umbr\ae \,\,as probabilistic nonlinear moments. In the following,
we denote the expectation operator \[
Eh\left(X\right)=\int h\left(x\right)f_{X}\left(x\right)dx
\]
where $f_{X}$ is the probability density function of the random variable
$X.$ We will use the following characterization of the Bernoulli
and Euler umbr\ae.
\begin{thm}
The Bernoulli umbra $\mathfrak{B}\left(x\right)$ satisfies, for all
admissible function $h,$\[
h\left(\mathfrak{B}\left(x\right)\right)=Eh\left(x-\frac{1}{2}+\imath L_{B}\right)
\]
where the random variable $L_{B}$ follows a logistic distribution,
with density\[
f_{L_{B}}\left(x\right)=\frac{\pi}{2}\text{sech}^{2}\left(\pi x\right),\,\, x\in\mathbb{R}.
\]
Accordingly, the Euler umbra $\mathfrak{E}\left(x\right)$ satisfies,
for all admissible function $h,$\[
h\left(\mathfrak{E}\left(x\right)\right)=Eh\left(x-\frac{1}{2}+\imath L_{E}\right)
\]
where the random variable $L_{E}$ follows the hyperbolic secant distribution
\[
f_{L_{E}}\left(x\right)=\text{sech}\left(\pi x\right).
\]
\end{thm}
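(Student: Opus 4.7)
\medskip

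\noindent\textbf{Proof plan.} The natural strategy is to reduce to the defining generating functions. Since the umbral evaluation is determined by its action on monomials, and an admissible function is one that can be expanded as an exponential generating series, it suffices to verify the two identities for $h(y)=e^{zy}$ with $|z|$ in a suitable disc; the general case then follows by linearity and the standard uniqueness argument for power series (equivalently, matching all moments).

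For the Bernoulli case, I would first compute the characteristic function of $L_B$. Recognizing that $2\pi L_B$ is a standard logistic variable (density $e^{-u}/(1+e^{-u})^2$), whose characteristic function is well known to be $\pi t/\sinh(\pi t)$, a change of variable gives
\[
E\exp(\imath z L_B) \;=\; \frac{z/2}{\sinh(z/2)}, \qquad |z|<2\pi.
\]
Multiplying by $e^{z(x-1/2)}$ and clearing the factor $e^{-z/2}/\sinh(z/2)=2/(e^{z}-1)$ produces exactly
\[
e^{z(x-1/2)}\cdot\frac{z/2}{\sinh(z/2)}=\frac{z\,e^{zx}}{e^{z}-1},
\]
which is the generating function \eqref{eq:expzB(x)} of $\mathfrak{B}(x)$. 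Matching coefficients of $z^{n}/n!$ then yields $h(\mathfrak{B}(x))=E h(x-\tfrac12+\imath L_B)$ for $h(y)=e^{zy}$, hence for every admissible $h$.

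The Euler case is entirely parallel. The main input is the classical self-reciprocal Fourier relation
\[
\int_{-\infty}^{\infty}\operatorname{sech}(\pi x)\,e^{\imath z x}\,dx \;=\; \operatorname{sech}(z/2),
\]
which gives $E\exp(\imath z L_E)=\operatorname{sech}(z/2)=2/(e^{z/2}+e^{-z/2})$. Multiplying by $e^{z(x-1/2)}$ collapses to $2e^{zx}/(e^{z}+1)$, the generating function of $\mathfrak{E}(x)$, and the same coefficient-matching concludes the proof.

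The only nontrivial ingredients are the two characteristic function computations; once these are in hand, the rest is algebraic simplification. Of these, the hyperbolic secant Fourier identity is the step most likely to need justification, and I would supply it by a standard contour-integration argument (closing a rectangular contour of height $1$ and exploiting the periodicity of $\operatorname{sech}(\pi z)$), or simply cite it as a classical result. Note finally that the shifts by $-\tfrac12$ and the imaginary displacements are both essential: they are precisely what convert the real integrals over $L_B$, $L_E$ into the exponential generating series defining the two umbrae.
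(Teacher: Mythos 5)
Your proposal is correct and follows essentially the same route as the paper: both reduce the claim to the exponential test function $h(y)=e^{zy}$ and to the Fourier pair identifying $\tfrac{z/2}{\sinh(z/2)}$ and $\operatorname{sech}(z/2)$ as the characteristic functions of the densities $\tfrac{\pi}{2}\operatorname{sech}^{2}(\pi x)$ and $\operatorname{sech}(\pi x)$. The only (immaterial) difference is direction: the paper starts from the required characteristic function and recovers the density by an inverse-transform table lookup in Bateman, whereas you start from the density and compute the characteristic function from the standard logistic law and the self-reciprocality of $\operatorname{sech}$.
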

\begin{proof}
Since \[
\exp\left(\imath t\mathfrak{B}\left(x\right)\right)=E\exp\left(it\left(x-\frac{1}{2}+\imath L_{B}\right)\right),
\]
by identification with (\ref{eq:expzB(x)}), the random variable $L_{B}$
has characteristic function\[
E\left(e^{\imath tL_{B}}\right)=\frac{\frac{t}{2}}{\sinh\left(\frac{t}{2}\right)}.
\]
But from \cite[1.9.2]{Bateman}\[
\int_{0}^{+\infty}\text{sech}^{2}\left(ax\right)\cos\left(xt\right)dx=\frac{\pi t}{2a^{2}}\text{csch}\left(\frac{\pi t}{2a}\right)
\]
so that, with $a=\pi,$ the density of $L_{B}$ is \[
f_{L_{B}}\left(x\right)=\frac{\pi}{2}\text{sech}^{2}\left(\pi x\right),
\]
which is a logistic density.

Accordingly, the characteristic function of the random variable $L_{E}$
is\[
Ee^{\imath L_{E}t}=\text{sech}\left(\frac{t}{2}\right).
\]
From \cite[1.9.1]{Bateman}, \[
\int_{0}^{+\infty}\text{sech}\left(ax\right)\cos\left(xt\right)dx=\frac{\pi}{2a}\text{sech}\left(\frac{\pi}{2a}t\right)
\]
so that, with $a=\pi,$ the density of $L_{E}$ is\[
f_{L_{E}}\left(x\right)=\text{sech}\left(\pi x\right).
\]
Thus $\pi L_{0}$ follows an hyperbolic secant distribution.
\end{proof}
As a consequence, the Bernoulli polynomials read \begin{equation}
B_{n}\left(x\right)=\mathfrak{B}\left(x\right)^{n}=E\left(x-\frac{1}{2}+\imath L_{B}\right)^{n}\label{eq:BnU}
\end{equation}
and the Bernoulli numbers\begin{eqnarray*}
B_{n} & =\mathfrak{B}{}^{n}=\mathfrak{B}\left(0\right)^{n}= & E\left(-\frac{1}{2}+\imath L_{B}\right)^{n},\,\, n\ge0.
\end{eqnarray*}
 Similarly, the Euler polynomials read \[
E_{n}\left(x\right)=\mathfrak{E}\left(x\right)^{n}=E\left(x-\frac{1}{2}+\imath L_{E}\right)^{n}
\]
and the Euler numbers\[
E_{n}=2^{n}\mathfrak{E}\left(\frac{1}{2}\right)^{n}=2^{n}E\left(\imath L_{E}\right)^{n}.
\]

We note from \cite[p. 471]{Devroye} that the random variable $L_{B}$
can also be obtained as \[
L_{B}=\frac{1}{2\pi}\log\frac{U}{1-U}=\frac{1}{2\pi}\log\frac{E_{1}}{E_{2}}
\]
where U is uniformly distributed on $\left[-1,+1\right]$, $E_{1}$
and $E_{2}$ are independent with exponential distribution $f_{E}\left(x\right)=\exp\left(-x\right),\,\, x\in\left[0,+\infty\right[$
and equality is in the sense of distributions. As for the random variable
$L_{E}$, from \cite{Devroye}, it can be obtained as \begin{equation}
L_{0}=\frac{1}{\pi}\log\vert C\vert=\frac{1}{\pi}\left(\log\vert N_{1}\vert-\log\vert N_{2}\vert\right)\label{eq:L0}
\end{equation}
where $C$ is Cauchy distributed and $N_{1}$ and $N_{2}$ are two
independent standard Gaussian random variables.

\section{the moment $\log\mathfrak{B}\left(x\right)$}

We compute\textbf{\emph{\[
\log\mathfrak{B}\left(x\right)=E\log\left(x-\frac{1}{2}+\imath L_{B}\right)
\]
}}which, by symmetry, is equal to\[
\frac{1}{2}E\log\left(\left(x-\frac{1}{2}\right)^{2}+L_{B}^{2}\right)=\log\left|x-\frac{1}{2}\right|+\frac{1}{2}E\log\left(1+\frac{L_{B}^{2}}{\left(x-\frac{1}{2}\right)^{2}}\right)
\]
but from \cite[2.6.30.2]{Prudnikov}\[
\int_{0}^{+\infty}\frac{\log\left(1+bz^{2}\right)}{\sinh^{2}cz}dz\overset{d}{=}h\left(b,c\right)=\frac{2}{c}\left(\log\frac{c}{\pi\sqrt{b}}-\psi\left(\frac{c}{\pi\sqrt{b}}\right)\right).
\]
Thus, by bisection of the angle $2\pi z,$ \[
\int_{0}^{+\infty}\frac{\log\left(1+bz^{2}\right)}{\sinh^{2}2\pi z}dz=\frac{1}{4}\int_{0}^{+\infty}\frac{\log\left(1+bz^{2}\right)}{\sinh^{2}\pi z\cosh^{2}\pi z}dz=\frac{1}{4}\int_{0}^{+\infty}\frac{\log\left(1+bz^{2}\right)}{\cosh^{2}\pi z}\left(\frac{\cosh^{2}\pi z}{\sinh^{2}\pi z}-1\right)dz
\]
so that \begin{eqnarray*}
\frac{\pi}{2}\int_{-\infty}^{+\infty}\frac{\log\left(1+bz^{2}\right)}{\cosh^{2}\pi z}dz & = & \pi\left(h\left(b,\pi\right)-4h\left(b,2\pi\right)\right).
\end{eqnarray*}
We deduce, with $b=\left(x-\frac{1}{2}\right)^{-2},$

\begin{eqnarray*}
\frac{1}{2}E\log\left(1+\frac{L_{B}^{2}}{\left(x-\frac{1}{2}\right)^{2}}\right) & = & \frac{\pi}{2}\left(h\left(b,\pi\right)-4h\left(b,2\pi\right)\right)=\frac{\pi}{2}\left(\frac{2}{\pi}\left(\log\frac{1}{\sqrt{b}}-\psi\left(\frac{1}{\sqrt{b}}\right)\right)-\frac{4}{\pi}\left(\log\frac{2}{\sqrt{b}}-\psi\left(\frac{2}{\sqrt{b}}\right)\right)\right)\\
 & = & \log\frac{1}{\sqrt{b}}-2\log\frac{2}{\sqrt{b}}-\psi\left(\frac{1}{\sqrt{b}}\right)+2\psi\left(\frac{2}{\sqrt{b}}\right)
\end{eqnarray*}
and, using the identity\[
\psi\left(2z\right)=\frac{1}{2}\psi\left(z\right)+\frac{1}{2}\psi\left(z+\frac{1}{2}\right)+\log2,
\]
we obtain after simplification\[
E\log\left(\imath L_{B}+x-\frac{1}{2}\right)=\log\frac{1}{\sqrt{b}}+E\log\left(1+bL_{B}^{2}\right)=\psi\left(\frac{1}{2}+\vert x-\frac{1}{2}\vert\right).
\]
For $x=1,$ we recover the result by Y.-P. Yu, namely\[
E\log\left(\frac{1}{2}+\imath L_{B}\right)=\psi\left(1\right)=-\gamma.
\]

\section{the moment $\log\mathfrak{E}\left(x\right)$}

This moment can be obtained according to the same approach, namely,
again with $b=\left(x-\frac{1}{2}\right)^{-2},$\[
\log\mathfrak{E}\left(x\right)=\log\frac{1}{\sqrt{b}}+\frac{1}{2}E\log\left(1+bL_{E}^{2}\right)
\]
where the latter expectation is now computed using \cite[2.6.30.1]{Prudnikov}
as\[
\int_{0}^{+\infty}\frac{\log\left(1+bz^{2}\right)}{\cosh\left(\pi z\right)}dz=2\log\frac{\Gamma\left(\frac{3}{4}+\frac{1}{2\sqrt{b}}\right)}{\Gamma\left(\frac{1}{4}+\frac{1}{2\sqrt{b}}\right)}-\log\frac{1}{2\sqrt{b}}
\]
so that\[
\log\mathfrak{E}\left(x\right)=\log2\frac{\Gamma^{2}\left(\frac{3}{4}+\frac{1}{2}\vert x-\frac{1}{2}\vert\right)}{\Gamma^{2}\left(\frac{1}{4}+\frac{1}{2}\vert x-\frac{1}{2}\vert\right)}.
\]

\section{the moments $\mathfrak{B}^{-k}\left(x\right)$ and $\mathfrak{E}^{-k}\left(x\right)$}

By derivation of the preceding results, we deduce\[
\mathfrak{B^{-1}}\left(x\right)=E\left(x-\frac{1}{2}+\imath L_{B}\right)^{-1}=\frac{d}{dx}\log\mathfrak{B}\left(x\right)
\]
so that we have\[
\mathfrak{B}^{-1}\left(x\right)=\begin{cases}
\psi'\left(x\right), & x>\frac{1}{2}\\
-\psi'\left(-x+1\right), & x<\frac{1}{2}\\
0 & x=\frac{1}{2}
\end{cases}
\]
and we remark that $\mathfrak{B}^{-1}\left(x\right)$ is not continuous
in $x=\frac{1}{2}.$ Since moreover for any integer $k\ge1$\[
E\left(x-\frac{1}{2}+\imath L_{B}\right)^{-k}=\frac{\left(-1\right)^{k-1}}{\left(k-1\right)!}\frac{d^{k-1}}{dx^{k-1}}E\left(x-\frac{1}{2}+\imath L_{B}\right)^{-1}
\]
we deduce\[
\mathfrak{B}^{-k}\left(x\right)=E\left(x-\frac{1}{2}+\imath L_{B}\right)^{-k}=\begin{cases}
\frac{\left(-1\right)^{k-1}}{\left(k-1\right)!}\psi^{\left(k\right)}\left(x\right), & x>\frac{1}{2}\\
-\frac{1}{\left(k-1\right)!}\psi^{\left(k\right)}\left(-x+1\right) & x<\frac{1}{2}
\end{cases}
\]
and in a particular case $x=1$, since $\psi^{\left(k\right)}\left(1\right)=\left(-1\right)^{k+1}k!\zeta\left(k+1\right),$\[
\mathfrak{B}^{-k}\left(1\right)=E\left(\frac{1}{2}+\imath L_{B}\right)^{-k}=k.\zeta\left(k+1\right).
\]
In the Euler case, we have\[
\mathfrak{E}^{-1}\left(x\right)=\frac{d}{dx}\log\mathfrak{E}\left(x\right)=\begin{cases}
\psi\left(\frac{x+1}{2}\right)-\psi\left(\frac{x}{2}\right) & x>\frac{1}{2}\\
\psi\left(\frac{1-x}{2}\right)-\psi\left(1-\frac{x}{2}\right) & x<\frac{1}{2}\\
0 & x=\frac{1}{2}
\end{cases}
\]
and $\mathfrak{E}^{-1}\left(x\right)$ is not continuous in $x=\frac{1}{2}$. 

More generally, for any integer $k\ge1,$\begin{eqnarray*}
\mathfrak{E}^{-k}\left(x\right) & = & \begin{cases}
\frac{\left(-\frac{1}{2}\right)^{k-1}}{\left(k-1\right)!}\left(\psi^{\left(k-1\right)}\left(\frac{x+1}{2}\right)-\psi^{\left(k-1\right)}\left(\frac{x}{2}\right)\right), & x>-\frac{1}{2}\\
\frac{\left(\frac{1}{2}\right)^{k-1}}{\left(k-1\right)!}\left(\psi^{\left(k-1\right)}\left(\frac{1-x}{2}\right)-\psi^{\left(k-1\right)}\left(1-\frac{x}{2}\right)\right), & x<-\frac{1}{2}
\end{cases}
\end{eqnarray*}

\section{the moment $\log\sin\frac{\pi\mathfrak{B}}{2}$}

This moment can be easily computed from the moment representation
as follows\begin{eqnarray*}
\log\sin\frac{\pi\mathfrak{B}}{2} & = & E\log\sin\frac{\pi}{2}\left(-\frac{1}{2}+\imath L_{B}\right)=E\log\sin\left(-\frac{\pi}{4}-\imath\frac{\pi L_{B}}{2}\right)=E\log\sin\left(-\frac{\pi}{4}+\imath\frac{\pi L_{B}}{2}\right)\\
 & = & \frac{1}{2}E\log\sin\left(-\frac{\pi}{4}-\imath\frac{\pi L_{B}}{2}\right)\sin\left(-\frac{\pi}{4}+\imath\frac{\pi L_{B}}{2}\right)
\end{eqnarray*}
and expanding the product of sines we obtain\[
\frac{1}{2}E\log\left(\frac{1}{2}\cos\left(-\frac{\pi}{2}\right)+\frac{1}{2}\cos\left(\imath\pi L_{B}\right)\right)=-\frac{1}{2}\log2+\frac{1}{2}E\log\cosh\left(\pi L_{B}\right).
\]
But since \[
\pi L_{B}=\frac{1}{2}\log\frac{U}{1-U},
\]
we deduce\[
\cosh\left(\pi L_{B}\right)=\frac{1}{2\sqrt{U\left(1-U\right)}}
\]
so that, with $E\log U=-1,$ we deduce\[
E\log\cosh\left(\pi L_{B}\right)=-\log2+1
\]
and the result \[
\log\sin\frac{\pi\mathfrak{B}}{2}=\frac{1}{2}-\log2
\]
follows.

\section{the Pochhammer $\left(\mathfrak{B}\left(x\right)\right)_{n}$}

The Pochhammer symbol \[
\left(\mathfrak{B}+1\right)_{n}=\frac{\Gamma\left(\mathfrak{B}+n+1\right)}{\Gamma\left(\mathfrak{B}+1\right)}
\]
has been evaluated in \cite[p.149]{N=0000F6rlund} as\[
\left(\mathfrak{B}+1\right)_{n}=\frac{n!}{\left(n+1\right)}.
\]
We use the {}``intuitive argument'' suggested by Carlitz \cite{Carlitz}
to compute its polynomial version $\left(\mathfrak{B}\left(x\right)\right)_{n}$
as follows: a generating function of $\left(\mathfrak{B}\left(x\right)\right)_{n}$
is\begin{eqnarray*}
\varphi\left(x,t\right) & = & \sum_{n=0}^{+\infty}\left(\mathfrak{B}\left(x\right)\right)_{n}\frac{t^{n}}{n!}=E\exp\left(-\left(x-\frac{1}{2}+\imath L_{B}\right)\log\left(1-t\right)\right)\\
 & = & \left(1-t\right)^{-\left(x-\frac{1}{2}\right)}E\exp\left(-\imath L_{B}\log\left(1-t\right)\right)
\end{eqnarray*}
with the characteristic function for the logistic density \[
E\exp\left(iL_{B}u\right)=\frac{\frac{u}{2}}{\sinh\left(\frac{u}{2}\right)}
\]
so that \[
\varphi\left(x,t\right)=\left(1-t\right)^{-\left(x-\frac{1}{2}\right)}\frac{\frac{1}{2}\log\left(1-t\right)}{\sinh\left(\frac{\log\left(1-t\right)}{2}\right)}=-\left(1-t\right)^{-\left(x-1\right)}\frac{\log\left(1-t\right)}{t}
\]
This term is identified as the derivative\[
\frac{d}{dx}\frac{\left(1-t\right)^{-\left(x-1\right)}}{t}=\frac{d}{dx}\sum_{n=0}^{+\infty}\frac{t^{n-1}}{n!}\left(x-1\right)_{n}=\sum_{n=0}^{+\infty}\frac{t^{n-1}}{n!}\frac{d}{dx}\left(x-1\right)_{n}
\]
with\[
\frac{d}{dx}\left(x-1\right)_{n}=\left(x-1\right)_{n}\left(\psi\left(x+n-1\right)-\psi\left(x-1\right)\right)
\]
so that the coefficient of $\frac{t^{n}}{n!}$ in $\varphi\left(x,t\right)$
is\[
\left(\mathfrak{B}\left(x\right)\right)_{n}=\frac{\left(x-1\right)_{n+1}}{n+1}\left(\psi\left(x+n\right)-\psi\left(x-1\right)\right).
\]
We recover the result by Nörlund by taking the limit case $x\to1$
which is $\frac{n!}{n+1}.$

\section{the Pochhammer $\left(\mathfrak{E}\left(x\right)\right)_{n}$}

We use the same approach to compute the Pochhammer symbol of the Euler
polynomial umbra; the generating function reads\begin{eqnarray*}
\varphi\left(x,t\right) & = & \sum_{n=0}^{+\infty}\left(\mathfrak{E}\left(x\right)\right)_{n}\frac{t^{n}}{n!}=E\exp\left(-\left(x-\frac{1}{2}+\imath L_{E}\right)\log\left(1-t\right)\right)\\
 & = & \left(1-t\right)^{-\left(x-\frac{1}{2}\right)}E\exp\left(-\imath L_{E}\log\left(1-t\right)\right)
\end{eqnarray*}
with the characteristic function of the hyperbolic secant distribution\[
Ee^{\imath L_{E}t}=\text{sech}\left(\frac{t}{2}\right)
\]
so that\[
E\exp\left(\imath L_{E}\log\left(1-t\right)\right)=\text{sech}\left(\frac{1}{2}\log\left(1-t\right)\right)=\frac{\sqrt{1-t}}{1-\frac{t}{2}}
\]
and\[
\varphi\left(x,t\right)=\frac{1}{\left(1-t\right)^{x-1}\left(1-\frac{t}{2}\right)}=\sum_{n=0}^{+\infty}\frac{t^{n}}{n!}\frac{n!}{2^{n}}\sum_{k=0}^{n}\frac{\left(x-1\right)_{k}}{k!}2^{k}
\]
so that \[
\left(\mathfrak{E}\left(x\right)\right)_{n}=\frac{n!}{2^{n}}\sum_{k=0}^{n}\frac{\left(x-1\right)_{k}}{k!}2^{k}.
\]

\end{document}